\documentclass[11pt,letterpaper]{article}
\usepackage[T1]{fontenc}
\usepackage{lmodern}
\usepackage[margin=1in]{geometry}
\usepackage{microtype}
\usepackage{amsmath,amssymb,amsthm,mathtools}
\usepackage{enumitem}
\usepackage{xcolor}
\usepackage{hyperref}
\hypersetup{colorlinks=true,linkcolor=blue!45!black,citecolor=blue!45!black,
  urlcolor=blue!45!black,pdftitle={Golden-ratio growth of Conway's subprime closure},
  pdfauthor={OpenAI Astra; Romain Popescu},
  pdfsubject={A proof for the cardinality-ratio conjecture of Caragiu, Vicol, and Zaki}}
\setlist[enumerate]{leftmargin=*,itemsep=0.65em,topsep=0.6em}
\numberwithin{equation}{section}
\newtheorem{theorem}{Theorem}[section]
\newtheorem{lemma}[theorem]{Lemma}
\newtheorem{proposition}[theorem]{Proposition}
\theoremstyle{remark}
\newtheorem{remark}[theorem]{Remark}
\DeclareMathOperator{\lpf}{lpf}
\DeclareMathOperator{\meas}{meas}
\newcommand{\N}{\mathbb N}
\newcommand{\Z}{\mathbb Z}
\newcommand{\R}{\mathbb R}
\newcommand{\Pp}{\mathbb P}
\newcommand{\ph}{\varphi}
\newcommand{\tot}{\phi_{\!E}}
\newcommand{\ind}{\mathbf 1}
\newcommand{\ee}[1]{e\!\left(#1\right)}
\newcommand{\floor}[1]{\lfloor #1\rfloor}
\newcommand{\ceil}[1]{\lceil #1\rceil}

\allowdisplaybreaks[2]
\title{Golden-ratio growth of\\Conway's subprime closure}
\author{Romain Popescu}
\date{}

\begin{document}

\maketitle

\abstract{
\begin{scriptsize}
Let $s(m)$ be the Conway subprime function  \cite{OEIS}, \cite{Roberts}, define the binary operation on the natural numbers 
$x \circ y= s(x + y)$, and denote by $C_n$, $n \ge 0$, 
the sequence of subsets of natural numbers defined by $C_0 = \{1\}$, 
and $C_{n+1} = C_n \cup (C_n \circ C_n)$. We prove the conjecture \cite{CVZ} by 
Caragiu, Vicol and Zaki that  $$\lim_{n\to \infty} \frac{ | C_{n+1}| }{ | C_n |}= \frac{1+\sqrt{5}}{2}.$$
The underlying mathematical proof in this paper was constructed with some algorithmic assistance from GPT-6 Astra \cite{openai2026astra}
and its correctness has been formally verified using the Lean 4 proof assistant 
\footnote{Lean formalization code is available at \cite{Github}}
\end{scriptsize}
}

\vspace{-1.2em}

\section{Introduction}

For a positive integer $m$, the Conway's subprime function \cite{OEIS}, \cite{Roberts} is defined as follows
\[
 s(m)=
 \begin{cases}
  1,&m=1,\\
  m,&m\text{ is prime},\\
  m/\lpf(m),&m\text{ is composite},
 \end{cases}
\]
where $\lpf(m)$ is the least prime factor of $m$. 

The Conway subprime function is used in the construction of  subprime Fibonacci sequences, i.e. integer
sequences ${\{x_k\}}_{k\ge 0}$,  satisfying a recursion $x_k = s (x_{k-1}+x_{k-2})$, 
which have been studied (see e.g. \cite{GuyKhovanovaSalazar2014}) 
for periodicity in relation to and analogy with the Collatz  problem).

In  \cite{CVZ}, Caragiu, Vicol, and Zaki  use the Conway subprime function to construct the following sets.
Starting with $1$, retain all previously generated numbers and apply the operation $(a,b)\mapsto s(a+b)$
to every pair available at the preceding stage:
\begin{equation}\label{eq:closure}
 C_0=\{1\},\qquad
 C_{n+1}=C_n \cup (C_n \circ C_n) = C_n\cup\{s(a+b):a,b\in C_n\}.
\end{equation}

Their main result in \cite{CVZ} is that

\begin{equation}\label{eq:exhaustion}
 \bigcup_{n\ge0}C_n=\N,
\end{equation}
The golden ratio $\ph=\frac{1+\sqrt5}{2}$ makes an unexpected appearance in their paper, since
Caragiu, Vicol, and Zaki  conjecture in \cite{CVZ} that

\begin{equation}\label{eq:conjecture}
 \lim_{n\to\infty}\frac{|C_{n+1}|}{|C_n|}
 =\ph.
\end{equation}

Write $M_n=\max C_n$. For $n\ge1$, let $R_n$ be the first prime absent
from $C_n$, and let $Q_n$ be the prime immediately preceding $R_n$.
These are well-defined: $C_n$ is finite and contains $2$. Thus every prime
at most $Q_n$ belongs to $C_n$, while $R_n\notin C_n$ and $Q_n\le M_n$.
We prove the following theorem, which is a more precise version of the aforementioned conjecture 3 in \cite{CVZ}:

\begin{theorem}\label{thm:main}
There is a constant $c>0$ such that
\begin{equation}\label{eq:main-scales}
 M_n\sim Q_n\sim c\ph^n,\qquad |C_n|\sim c\ph^{n-1}.
\end{equation}
Consequently,
\begin{equation}\label{eq:main-limits}
 \lim_{n\to\infty}\frac{|C_{n+1}|}{|C_n|}=\ph,
 \qquad
 \lim_{n\to\infty}\frac{|C_n|}{M_n}=\frac1\ph.
\end{equation}
\end{theorem}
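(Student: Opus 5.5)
We track the largest elements of $C_n$ and show that they grow by a Fibonacci-type recursion. After that, we count the whole set by showing that $C_n$ is, up to $o(\ph^n)$ exceptions, an explicitly described set: every integer below a threshold, plus a sparse set of large elements.

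\textbf{Step 1: the growth of $M_n$.} Since $s(m)\le m$, with equality exactly when $m$ is prime or $m=1$, we have $M_{n+1}\le 2M_n$. The real growth comes from primes. If $p,q\in C_n$ and $p+q$ is prime, then $p+q\in C_{n+1}$. If $p+q$ is composite, then $s(p+q)\le (p+q)/2$ unless the least prime factor is small in a controlled way.

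The key structural claim is that the top of $C_{n+1}$ consists of primes $r=a+b$ with $a\in C_n$ large and $b\in C_{n-1}$ large. A sum of two elements of $C_n$ that are both near $M_n$ is even, hence gets halved. So the effective recursion is $M_{n+1}\approx M_n+M_{n-1}$. Odd-plus-even sums are needed to land on a prime, and $2$-adic considerations force one summand to come from an earlier generation.

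We make this precise in three parts.
\begin{enumerate}
\item Prove an upper bound $M_{n+1}\le M_n+M_{n-1}+O(\cdot)$ by induction. This requires classifying which elements of $C_n$ lie above $M_{n-1}$: they are primes created at step $n$. Adding two such primes gives an even number, and $s$ of it is at most half.
\item Prove the matching lower bound using primes in short intervals. The Baker--Harman--Pintz theorem supplies primes in $[x,x+x^{0.525}]$. Combined with $Q_{n-1}$ being large, this says every integer $b\le Q_{n-1}$ of suitable parity is available, so we can pick $b$ with $M_n+b$ prime close to $M_n+Q_{n-1}$.
\item Conclude that $M_n/\ph^n$ and $Q_n/\ph^n$ converge to a common constant $c$. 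The errors are relatively $O(M_n^{-0.475})$, hence summable.
\end{enumerate}

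\textbf{Step 2: all primes up to about $M_n$ are present.} We show $Q_n\sim M_n$. Every prime $r\le(1-\epsilon)M_n$ is written as $r=a+b$ with $a,b$ in $C_{n-1}$ and one of them small. Since every integer below $Q_{n-1}$ is in $C_{n-1}$ (see Step 3), it suffices to have $r-Q'\le Q_{n-1}$ for some prime $Q'\in C_{n-1}$ just below $r$. This is exactly the dense-primes-in-short-intervals input again.

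\textbf{Step 3: the counting.} We claim that every integer $m\le Q_n/\ph$ (roughly $Q_{n-1}$) lies in $C_n$. The argument: for composite $m$, take a prime $p$ and a prime $q$ with $p+q=\lpf\cdot m$ small enough. More simply, $m=s(p\,m')$ with $p$ the least prime factor. Concretely, $m=s(2m)$ whenever $2m$ is a sum of two elements of $C_{n-1}$. Even numbers up to $2Q_{n-1}$ are sums of two smaller elements of $C_{n-1}$, using $1$ plus an odd number, or an interval argument. So $[1,\approx Q_{n-1}]\subset C_n$.

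Above that threshold, the elements of $C_n$ are essentially the primes in $(Q_{n-1},M_n]$ together with $s$-values in that range. We must show these number $o(\ph^n)$. Composite values $s(a+b)\le(a+b)/2\le M_{n-1}$ lie below the threshold up to lower-order terms. So the excess above $\approx c\ph^{n-1}$ consists of primes and values of $s$ at primes, of density $O(1/\log M_n)$. This gives $|C_n|=c\ph^{n-1}(1+o(1))$, and \eqref{eq:main-limits} follows immediately.

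\textbf{Main obstacle.} The hardest step is the exact induction in Steps 1--3. We must show that nothing composite-generated exceeds $M_{n-1}(1+o(1))$, which controls the upper bound and makes $|C_n|\sim M_{n-1}$ rather than larger. We also must show that the interval $[1,M_{n-1}(1-o(1))]$ is genuinely filled. The filling needs primes in short intervals uniformly, and for small $n$ a finite computation as the base case. I would first try to carry one joint inductive hypothesis: $[1,(1-\delta_n)M_{n-1}]\subset C_n$, $C_n\cap((1+\delta_n)M_{n-1},\infty)$ consists of primes, and $M_n\le M_{n-1}+M_{n-2}+E_n$. Here $\delta_n\to0$ and the errors $E_n$ are summable after dividing by $\ph^n$.
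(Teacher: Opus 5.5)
There is a genuine gap, and it sits exactly where you put the ``main obstacle.'' Steps 1.2, 2 and 3 all rely on a \emph{solid} interval $[1,(1-\delta_n)M_{n-1}]\subseteq C_n$, i.e.\ every integer up to about $Q_{n-1}$ lying in $C_n$. This cannot be proved with known methods, because it has Goldbach strength.

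To see this, use the structure of $C_{n-1}$: its composites are at most $M_{n-2}$, and everything above $M_{n-2}$ is prime. Take a composite $m$ with $\frac{M_{n-1}+M_{n-2}}2<m\le(1-\delta_n)M_{n-1}$. It can enter $C_n$ only as $s(a+b)=m$ with $a,b\in C_{n-1}$ and $a+b=\ell m\le 2M_{n-1}$. Since $m>0.8M_{n-1}$ asymptotically, this forces $\ell=2$. Then $a+b>M_{n-1}+M_{n-2}$ forces $a,b>M_{n-2}$, so $a$ and $b$ are both primes. Your hypothesis would therefore make every even $2m$ with $m\in(0.81X,(1-o(1))X]$, $X=M_{n-1}$, a sum of two primes, for all large $n$. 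That is binary Goldbach on long intervals at infinitely many scales, which is open.

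Your justification does not reach this. ``$1$ plus an odd number'' fails because $2m-1$ is typically a composite above $M_{n-2}$, hence not in $C_{n-1}$. An interval argument (core of $C_{n-1}$ plus its primes) only gives $K_n\gtrsim(Q_{n-1}+K_{n-1})/2$, which stabilizes near $(\ph/\sqrt5)\,Q_{n-1}\approx0.72\,Q_{n-1}$, not at $Q_{n-1}$. What is provable is the almost-all version, Lemma~\ref{lem:filling}: $H_{j+1}(X)\ll_A X/\log^AX$ once all primes up to $X$ lie in $C_j$. The paper's Remark stresses that it never needs $K_n\sim M_{n-1}$.

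With only ``almost all'' available, your lower bound survives in counting form: many primes near $X$ against few holes below $Y$, as in Lemma~\ref{lem:extension}. But it only gives $Q_{n+1}\ge Q_n+Q_{n-1}-o(\ph^n)$, hence $Q_n\sim d\ph^n$ for some $d\le c$. Your claim in 1.3 that $M_n/\ph^n$ and $Q_n/\ph^n$ have a common limit does not follow from the two recursions.

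Your Step 2 is circular. Primes of $C_{n-1}$ lie below $M_{n-1}\approx M_n/\ph$, so no prime of $C_{n-1}$ is ``just below'' $r\approx(1-\epsilon)M_n$. The known-complete primes stop at $Q_{n-1}$, which only reaches $r\lesssim Q_{n-1}+M_{n-2}$. That falls short of $M_n$ by $\asymp\ph^n$ precisely when $d<c$. Adding the single prime $M_{n-1}$ to an interval would work with a solid core, but fails once the interval has holes.

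The missing idea is the paper's Proposition~\ref{prop:equal}:
\begin{enumerate}
\item Assume $d<c$, and take an earlier maximum $t=M_{n-1-k}$ that lies beyond the predicted complete prime prefix at its scale.
\item Apply the circle-method Lemma~\ref{lem:binary} to $2p+q=2u-t$, with $p,q$ in prescribed intervals of complete primes. Via $b=(t+q)/2$ and $u=p+b$, this generates almost every prime $u$ in an interval beyond $\ph^2dX$.
\item Contradict the definition of $R_{n+2}$ by counting decompositions $r=u+(r-u)$.
\end{enumerate}
Without $d=c$ your Step 3 count also fails. The conclusion $|C_n|\sim M_{n-1}$ needs $H_n(M_{n-1})=o(M_{n-1})$, and that in turn needs $Q_{n-1}\sim M_{n-1}$.
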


The proof of the  theorem uses the cited exhaustion result in \cite{CVZ},
an almost-all theorem for Goldbach representations with nearly equal primes,
and two standard estimates for exponential sums over primes. A very particular
interval-restricted representation lemma needed here is proved in
Appendix~\ref{app:binary}. The proof was constructed with algorithmic assistance from OpenAI Astra, 
and its correctness has been formally verified using the Lean proof assistant (code available from github).

\section{Heuristics and Proof overview}

\begin{enumerate}
\item \textbf{Parity gives the Fibonacci upper bound.}
If a sum is composite, division by its least prime factor prevents the
output from exceeding the preceding maximum. A new maximum must therefore
be prime. An odd prime is a sum of an odd and an even input, and the even
input cannot exceed the maximum from one generation earlier. This gives
$M_{n+1}\le M_n+M_{n-1}$.

\item \textbf{Complete prime prefixes reproduce at the Fibonacci scale.}
When every prime up to $X$ is present, almost-all Goldbach representations
produce all but $O_A(X/\log^A X)$ integers up to $X$ one generation later.
These integers, paired with primes near a larger cutoff, extend the complete
prime prefix by an almost Fibonacci step. Summable relative losses then give
positive constants $c,d$ with $M_n\sim c\ph^n$ and $Q_n\sim d\ph^n$.

\item \textbf{A gap between these constants produces extra primes.}
If $d<c$, an earlier maximum supplies a generated prime $t$ beyond the
predicted complete prefix at a suitable scale. The equation
$2p+q=2u-t$, with $p$ and $q$ in specified complete prime intervals,
generates $u$ through the two operations
$(t+q)/2$ and $p+(t+q)/2$. An almost-all representation estimate therefore
produces almost every prime in an interval beyond the predicted prefix.

\item \textbf{Two small exceptional sets eliminate the gap.}
Take the first missing prime two generations later. It has many possible
decompositions $r=u+(r-u)$ with $u$ in the extra interval. Only a small
number of these $u$ are missing, and only a small number of their complementary
integers $r-u$ are missing. At least one pair is present, forcing $r$ to be
generated and contradicting its definition. Hence $d=c$.

\item \textbf{The preceding maximum determines the cardinality.}
Since $Q_{n-1}\sim M_{n-1}$, the Goldbach filling estimate makes
$C_n$ almost complete below $M_{n-1}$. Every element above that threshold
is prime, and those primes contribute only $o(M_{n-1})$ elements.
Thus $|C_n|\sim M_{n-1}$, yielding both the golden-ratio growth and
the limiting density $1/\ph$.
\end{enumerate}

\section{Analytic preliminaries}

All logarithms are natural. We will denote cardinality of a set $X$ either by $|X|$ or by $\# X$. \ 
The notations $f=O_A(g)$ and $f\ll_A g$ mean $|f|\le C_Ag$ for all sufficiently large values of the relevant
parameter. We will write $f\asymp g$ when both $f\ll g$ and $g\ll f$ hold, and $f\sim g$
when $f/g\to1$. 

Let $\Pp$ denote the set of primes and $\pi(X)$
the number of primes at most $X$. Intervals used to count missing
elements are to be intersected with $\N$. Define
\[
 H_j(X)=\#\bigl(\{1,\ldots,\floor X\}\setminus C_j\bigr).
\]

\begin{lemma}[Prime counts]\label{lem:pnt}
As $X\to\infty$,
\begin{equation}\label{eq:short-primes}
 \#\left(\Pp\cap\left[X-\frac{X}{\log^2X},X\right]\right)
 \sim\frac{X}{\log^3X}.
\end{equation}
For fixed $0<\alpha<\beta$,
\begin{equation}\label{eq:proportional-primes}
 \#\bigl(\Pp\cap[\alpha X,\beta X]\bigr)
 \sim\frac{(\beta-\alpha)X}{\log X},
\end{equation}
Also $\pi(X)\sim X/\log X$. The prime immediately preceding a large
real $X$ is $X+O(X/\log^2X)$, and the prime immediately following $X$
is $X+o(X)$.
\end{lemma}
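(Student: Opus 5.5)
All four assertions follow from the prime number theorem with the classical de la Vallée Poussin error term,
\[
 \pi(X)=\operatorname{li}(X)+O\bigl(X\exp(-c_0\sqrt{\log X})\bigr),
\]
for some absolute constant $c_0>0$. The essential point is that this error is $O_A(X/\log^A X)$ for every fixed $A$. It is therefore smaller than any of the main terms that appear, including the short-interval main term $X/\log^3X$.

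\textbf{Proportional intervals and the basic count.} The statement $\pi(X)\sim X/\log X$ is the prime number theorem itself. For \eqref{eq:proportional-primes} I write the count as $\pi(\beta X)-\pi(\alpha X)$, up to at most one endpoint term. By the asymptotic just stated this equals
\[
 \frac{\beta X}{\log(\beta X)}-\frac{\alpha X}{\log(\alpha X)}+o\!\left(\frac{X}{\log X}\right).
\]
Since $\log(\gamma X)=\log X+O(1)$ for fixed $\gamma$, both main terms equal $\gamma X/\log X\,(1+O(1/\log X))$. Their difference is therefore $(\beta-\alpha)X/\log X+o(X/\log X)$.

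\textbf{Short interval $[X-X/\log^2X,\,X]$.} This is the main obstacle. The plain asymptotic $\pi\sim X/\log X$ carries an error $o(X/\log X)$, which is larger than the target $X/\log^3 X$, so the proportional argument does not transfer. I will use the error-term form instead. Set $Y=X-X/\log^2X$. Then
\[
 \pi(X)-\pi(Y)=\int_Y^X\frac{dt}{\log t}+O\bigl(X e^{-c_0\sqrt{\log X}}\bigr).
\]
On $[Y,X]$ we have $\log t=\log X+O(1/\log^2X)$. Hence the integral equals $(X-Y)/\log X\,(1+o(1))=X/\log^3X\,(1+o(1))$. The error term is $o(X/\log^3X)$, which gives \eqref{eq:short-primes}. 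One could instead appeal to Huxley's theorem on primes in intervals $[X-X^{\theta},X]$ with $\theta>7/12$, but the de la Vallée Poussin bound suffices and is more elementary.

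\textbf{Neighbouring primes.} Let $p$ be the prime immediately preceding a large real $X$. By \eqref{eq:short-primes}, the interval $[X-X/\log^2X,\,X]$ contains asymptotically $X/\log^3X>0$ primes, so it contains at least one prime for large $X$. Hence $X-X/\log^2X\le p\le X$, that is, $p=X+O(X/\log^2X)$.

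For the following prime, fix $\varepsilon>0$ and apply \eqref{eq:proportional-primes} with $\alpha=1$ and $\beta=1+\varepsilon$. The interval $[X,(1+\varepsilon)X]$ then contains $\sim\varepsilon X/\log X$ primes, which is positive for large $X$. So the next prime after $X$ is at most $(1+\varepsilon)X$ for all large $X$. Since $\varepsilon$ is arbitrary, it is $X+o(X)$.

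The short-interval prime also lies within $X/\log^2 X$ above $X$. This follows by applying \eqref{eq:short-primes} at $X'=X+2X/\log^2X$ and checking that $X'-X'/\log^2X'\geq X$. So the stronger bound holds in that direction too, although only $o(X)$ is claimed.
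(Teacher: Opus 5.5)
Your proof is correct and essentially the paper's: a prime number theorem with error $O_A(X/\log^A X)$, differenced across $[X-X/\log^2X,X]$ and $[\alpha X,\beta X]$, with the neighbouring-prime claims read off from nonemptiness of these intervals. The only change is the form of the input: you use $\pi=\operatorname{li}+O(Xe^{-c_0\sqrt{\log X}})$, whereas the paper takes $\vartheta(X)=X+O_D(X/\log^DX)$ from the $q=1$ case of the Siegel--Walfisz major-arc estimate it reuses in the appendix, and then converts prime weights to counts via $\log p\sim\log X$. (A minor slip in your closing aside: taking $X'=X+2X/\log^2X$ gives a following prime within $2X/\log^2X$ of $X$, not $X/\log^2X$; this is harmless, since only $o(X)$ is claimed.)
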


\begin{proof}
The $q=1$, zero-frequency case of \cite[Lemma 5 (Major arc estimates), equation (26)]{LyallRice} or \cite[Proposition~24]{Tao},  
which both follow from the Siegel-Walfisz theorem on primes in arithmetic progressions, yields
for every fixed $D>0$, that
\[
 \sum_{m\le X}\Lambda(m)=X+O_D(X/\log^D X).
\]
where  $\Lambda$ is the  Mangoldt function.
The prime powers with exponent at least two have total weight
$O(\sqrt X\log^2X)$: there are at most $\log_2X$ possible exponents,
and for each exponent at most $\sqrt X$ bases, each of weight at most
$\log X$. Thus
\begin{equation}\label{eq:theta}
 \vartheta(X):=\sum_{p\le X}\log p
 =X+O_D(X/\log^D X).
\end{equation}
Subtract this formula at $X$ and $X-h$, where $h=X/\log^2X$, taking
$D=4$. It gives total prime weight $h+O(X/\log^4X)\sim h$ in
$(X-h,X]$. Every such prime has logarithm $\sim\log X$, uniformly,
which proves \eqref{eq:short-primes}; changing an endpoint adds at most
one prime. The same subtraction at $\beta X$ and $\alpha X$ proves
\eqref{eq:proportional-primes}. Partial summation gives
\[
 \pi(X)=\frac{\vartheta(X)}{\log X}
       +\int_2^X\frac{\vartheta(t)}{t\log^2t}\,dt
 =\frac{X}{\log X}+O(X/\log^2X).
\]
For the integral bound, split at $\sqrt X$ and use $\vartheta(t)\ll t$.
The preceding-prime claim follows from the nonemptiness of the interval
in \eqref{eq:short-primes}. For every fixed $\epsilon>0$,
\eqref{eq:proportional-primes} supplies a prime between $X$ and
$(1+\epsilon)X$ for all sufficiently large $X$. Letting $\epsilon$
be arbitrary proves the following-prime claim.
\end{proof}

\begin{lemma}[Goldbach filling]\label{lem:filling}
Fix $A>0$. If $C_j$ contains every prime at most $X$, then
\begin{equation}\label{eq:filling}
 H_{j+1}(X)\ll_A\frac{X}{\log^A X}.
\end{equation}
The implicit constant is independent of $j$.
\end{lemma}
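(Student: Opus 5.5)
We need to show that almost every $m\le X$ lies in $C_{j+1}$. The idea is that $m$ is produced as $s(p+q)$ with $p,q\le X$ primes in $C_j$, chosen so that $p+q$ is exactly $2m$ or $3m$, or more generally $\ell m$ for a small prime $\ell$ that is the least prime factor of $p+q$.

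\textbf{Step 1: even targets.} For $m\le X$, if $2m=p+q$ with primes $p,q\le X$, then $s(p+q)=2m/2=m$, since $2m\ge4$ is composite with least prime factor $2$. Both primes lie in $C_j$ by hypothesis, so $m\in C_{j+1}$. The constraint $p,q\le X$ is automatic once $p,q$ are roughly of size $m$. More precisely, when $2m\le X$ any representation works. When $X/2<m\le X$ we need $p,q\le X$, and a representation with nearly equal primes, $|p-m|\le m/\log^B m$, suffices. So the task reduces to an almost-all statement: all but $O_A(X/\log^A X)$ even numbers $N\le 2X$ admit a Goldbach representation $N=p+q$ with $|p-q|\le N/\log^B N$.

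\textbf{Step 2: the almost-all estimate.} This is the advertised almost-all theorem for Goldbach representations with nearly equal primes. I would invoke it as a black box in the form proved by the circle method with the Siegel--Walfisz major arcs and a Vinogradov-type minor-arc bound (the two standard exponential-sum estimates). Set $h=N/\log^B N$. Consider the weighted count
\[
R(N)=\sum_{\substack{p+q=N\\ |p-N/2|\le h}}\log p\log q .
\]
On the major arcs, $R(N)=\mathfrak S(N)h+O(h/\log^{C}N)$. Summing over $N$ in dyadic ranges, Bessel/Parseval over the minor arcs bounds the mean square of the error by $X h^2/\log^{2A+C'}X$. By Chebyshev's inequality, the number of exceptional $N$ is $O_A(X/\log^A X)$. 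Since $\mathfrak S(N)\gg1$ for even $N$, every non-exceptional even $N$ has $R(N)>0$.

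The main obstacle is the minor-arc bound in the \emph{short} variable, since one prime is restricted to an interval of length $h$. I would try the standard device first: write $p=N/2+u$ and $q=N/2-u$, bound the sum over $N$ by $\int |S_h(\alpha)|^2|S(\alpha)|^2$, and use Parseval on the short sum together with $\sup_{\text{minor}}|S(\alpha)|\ll X/\log^{D}X$. If that fails, I would cite the known result on short-interval almost-all Goldbach, where $h$ a power of $\log$ smaller than $N$ is well within the range of known results.

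\textbf{Step 3: assembling the count.} Every $m\le X$ whose double $2m$ is non-exceptional lies in $C_{j+1}$. The map $m\mapsto 2m$ is injective, so
\[
H_{j+1}(X)\le \#\{\text{exceptional } N\le 2X\}+O(1)\ll_A X/\log^A X .
\]
The $O(1)$ accounts for small $m$, such as $m=1$, which already lies in $C_0\subseteq C_j$. The implicit constant depends only on the analytic estimate, which does not involve $j$, so it is uniform in $j$ as required.
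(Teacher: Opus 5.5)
Your proof is essentially the paper's argument. Both produce $m=s(p+q)$ from $2m=p+q$, with $p,q\le X$ primes already in $C_j$. Both rely on an almost-all Goldbach theorem with nearly equal primes. The paper cites Coppola--Laporta/Baker--Harman with window $|p-m|,|q-m|\le m^{3/4}$ and applies it uniformly to $\sqrt X<m\le X-X^{3/4}$ via dyadic blocks, so it needs no split at $X/2$. Your weaker window $m/\log^B m$ would also suffice.

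There is one bookkeeping error to fix. Your window does not guarantee $p,q\le X$ when $X-m<m/\log^B m$. For instance, $m=X$ forces $p=q=X$, so it needs $X$ itself to be prime. Those $m$ can have non-exceptional $2m$ and still not be produced this way, so the inequality $H_{j+1}(X)\le\#\{\text{exceptional }N\le 2X\}+O(1)$ in Step~3 is false as written. You must discard the top range $X-X/\log^B X<m\le X$, which costs $O(X/\log^B X)$ rather than $O(1)$. Taking $B\ge A$ absorbs this, exactly as the paper discards $m>X-X^{3/4}$ and $m\le\sqrt X$.

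Your fallback of citing a known short-window result is what the paper does. Your Step~2 sketch, if carried out, has two gaps. First, the window depends on $N$, so you would need fixed intervals on blocks of $N$. Second, the truncated singular series is close to $\mathfrak S(N)$ only outside a small exceptional set of $N$, not for every $N$. Appendix~A handles both of these issues in the analogous setting.
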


\begin{proof}
The corollary to Theorem~1 in \cite{CL}, with exponent $5/8+1/8=3/4$, or equations (1.5), (1.6) and Theorem 1.2 in \cite{BakerHarman1998}
yield that all but
$O_A(Y/\log^A Y)$ integers $m\in[Y,2Y]$ have a representation
\begin{equation}\label{eq:near-goldbach}
 2m=p+q,\qquad p,q\in\Pp,\qquad |p-m|,|q-m|\le m^{3/4}.
\end{equation}
Here their parameter for the even number $2m$ is rescaled by a factor
of two, which does not change the order of the exceptional-set bound.

Consider $\sqrt X<m\le X-X^{3/4}$. Cover this range by dyadic
intervals $[X/2^{r+1},X/2^r]$ that meet it. Each lower endpoint $Y$
is at least $\sqrt X/2$, their sum is at most $X$, and therefore
$\log Y\ge \tfrac13\log X$ for large $X$. The total number of
exceptions to \eqref{eq:near-goldbach} is consequently at most
\[
 C_A\sum_r\frac{X/2^{r+1}}{\log^A(X/2^{r+1})}
 \le 3^AC_A\frac{X}{\log^A X}.
\]
For every nonexceptional $m$ in this range,
$p,q\le m+m^{3/4}\le X$. Hence $p,q\in C_j$, and, as $m\ge2$,
the number $2m$ is composite with least prime factor $2$. Thus
$s(p+q)=m\in C_{j+1}$. The omitted bottom and top ranges contain
$O(\sqrt X+X^{3/4}+1)$ integers. This is $O_A(X/\log^A X)$,
proving the lemma.
\end{proof}

\begin{lemma}[Two primes in specified intervals]\label{lem:binary}
Fix $0<\alpha<\beta$, $0<\gamma<\delta$, $\eta>0$, $B>0$, and $A>0$.
Put
\[
 I_X=[\alpha X,\beta X],\qquad J_X=[\gamma X,\delta X],
 \qquad
 \mathcal J_X(N)=\meas\{v\in I_X:N-2v\in J_X\}.
\]
Among the odd integers $1\le N\le BX$ with
$\mathcal J_X(N)\ge\eta X$, at most $O(X/\log^A X)$ fail to have
a representation
\begin{equation}\label{eq:binary-rep}
 N=2p+q,\qquad p\in\Pp\cap I_X,\quad q\in\Pp\cap J_X.
\end{equation}
The implicit constant may depend on all the displayed fixed parameters.
\end{lemma}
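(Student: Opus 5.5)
We will run the standard circle method for the binary problem $N=2p+q$, in weighted form with the exceptional set controlled by Bessel/Parseval in $L^2$. Put
\[
 S_1(\theta)=\sum_{p\in\Pp\cap I_X}\log p\;\ee{2p\theta},\qquad
 S_2(\theta)=\sum_{q\in\Pp\cap J_X}\log q\;\ee{q\theta},
\]
\[
 r(N)=\int_0^1S_1(\theta)S_2(\theta)\ee{-N\theta}\,d\theta
 =\sum_{\substack{2p+q=N\\ p\in I_X,\,q\in J_X}}\log p\log q .
\]
It suffices to show that $r(N)>0$ for all but $O(X/\log^AX)$ of the relevant odd $N\le BX$.

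Fix a large $D=D(A)$ and set $P=\log^DX$. The major arcs $\mathfrak M$ are the intervals $|\theta-a/q|\le P/X$ with $q\le P$ and $(a,q)=1$; the minor arcs $\mathfrak m$ are the rest.

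\textbf{Major arcs.} On $\mathfrak M$, Siegel--Walfisz (the same input as Lemma~\ref{lem:pnt}, via the major-arc estimates of \cite{LyallRice} or \cite{Tao}) gives
\[
 S_i(a/q+\beta)=\frac{\mu(q_i)}{\phi(q_i)}\,T_i(\beta)+O\bigl(X/\log^{C}X\bigr)
\]
for every fixed $C$. Here $T_i$ are the smooth integrals over $I_X$ and $J_X$, and $q_1=q/(q,2)$ records the doubled frequency in $S_1$.

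The major-arc contribution is then the singular series truncated at $P$, times a singular integral. The singular integral is exactly $\mathcal J_X(N)$ up to $O(X/P)$, because $\int T_1T_2\,\ee{-N\beta}\,d\beta$ measures $\{v\in I_X:N-2v\in J_X\}$. So it is at least $\eta X/2$.

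For odd $N$ the singular series is $\gg\prod_{p\mid N,\,p>2}(1-1/(p-1)^2)\cdots\gg1$. The local factor at $2$ is nonzero precisely because $N$ is odd: $2p+q$ with $q$ odd is odd. The tail of the series beyond $q>P$ is small except for $O(X/\log^AX)$ values of $N$, by the usual averaging argument. The error terms are $O(X/\log^{C}X)$ after integrating over the arcs, whose total measure is $\ll P^3/X$. Hence the major-arc part is $\gg X$ for the relevant $N$ outside a small exceptional set.

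\textbf{Minor arcs.} Set
\[
 E(N)=\int_{\mathfrak m}S_1S_2\,\ee{-N\theta}\,d\theta .
\]
By Bessel's inequality,
\[
 \sum_N|E(N)|^2\le\int_{\mathfrak m}|S_1|^2|S_2|^2
 \le \sup_{\mathfrak m}|S_2|^2\int_0^1|S_1|^2
 \ll \sup_{\mathfrak m}|S_2|^2\cdot X\log X .
\]
Vinogradov's minor-arc bound for prime exponential sums (restricted to an interval, e.g. via Vaughan's identity, the second standard estimate cited in the introduction) gives $\sup_{\mathfrak m}|S_2|\ll X/\log^{D/2-5}X$. Therefore the number of $N$ with $|E(N)|\ge cX$ is
\[
 \ll X\log^{11-D}X\le X/\log^AX
\]
once $D$ is large. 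Summing the major- and minor-arc exceptional sets completes the argument.

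\textbf{Main obstacle.} The delicate step is the uniform major-arc asymptotic with a lower bound for the singular series. Two points need care. The first is the shift caused by the coefficient $2$, so $S_1$ at $a/q$ behaves like the frequency $2a/q$, and the parity of $q$ must be tracked. The second is truncating the singular series while losing only $O(X/\log^AX)$ values of $N$. If that truncation gets messy, I would instead apply Bessel to the full difference $r(N)-\mathfrak S_P(N)\mathcal J_X(N)$ and bound the truncated singular series below directly, using that its factors for $q\le P$ are bounded below for odd $N$ apart from the contribution of large prime divisors of $N$.
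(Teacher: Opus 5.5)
Your proposal is correct and follows essentially the paper's own argument in Appendix~A: a weighted circle method with $P$ a fixed power of $\log X$; Siegel--Walfisz on the major arcs, with the reduced denominator $q/(q,2)$ handling the doubled frequency; the singular integral identified with $\mathcal J_X(N)$ up to $O(X/P)$; a Vaughan-type supremum bound for the sum over $J_X$ combined with Bessel/Parseval on the minor arcs; and an averaged bound over $N\le BX$ for the singular-series tail. One small slip is that in the Euler product the factor $1-1/(\ell-1)^2$ belongs to odd primes $\ell\nmid N$, while odd $\ell\mid N$ contribute $1+1/(\ell-1)$, but this does not affect your conclusion that $\mathfrak S(N)\ge2\prod_{\ell>2}\bigl(1-1/(\ell-1)^2\bigr)>0$ for odd $N$.
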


The proof of this lemma is given in Appendix~\ref{app:binary}. It uses no assertion
about every individual odd target; the exceptional set is essential.

\section{Fibonacci growth and the two asymptotic constants}

\begin{lemma}[Structure and upper bound]\label{lem:structure}
For $n\ge1$, the maximum $M_n$ is prime and
\begin{equation}\label{eq:structure}
 C_n\subseteq\{1,\ldots,M_{n-1}\}
       \cup\bigl(\Pp\cap[1,M_n]\bigr).
\end{equation}
For $n\ge1$,
\begin{equation}\label{eq:fib-upper}
 M_{n+1}\le M_n+M_{n-1},\qquad M_n\le F_{n+2},
\end{equation}
where $F_0=0$, $F_1=1$, and $F_{r+2}=F_{r+1}+F_r$.
\end{lemma}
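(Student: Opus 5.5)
We will prove everything by induction on $n$, using two elementary facts about $s$. First, $s(m)\le m$ always, and $s(m)=m$ exactly when $m=1$ or $m$ is prime. Second, if $m$ is composite then $s(m)=m/\lpf(m)\le m/2$. Before starting, we compute the first generations directly. From $C_0=\{1\}$ we get $C_1=\{1,2\}$, so $M_0=1$ and $M_1=2$. Then $C_2=\{1,2,3\}$, using $s(2)=2$, $s(3)=3$ and $s(4)=2$, so $M_2=3$. Both \eqref{eq:structure} and \eqref{eq:fib-upper} hold for these small $n$; note $F_3=2$ and $F_4=3$.

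\textbf{Structure \eqref{eq:structure}.} Assume \eqref{eq:structure} holds at level $n$, so $M_n$ is prime, and take any element $x\in C_{n+1}$. If $x\in C_n$, then $x\le M_{n-1}\le M_n$ or $x$ is a prime at most $M_n$. Both cases land in the level-$(n+1)$ set $\{1,\ldots,M_n\}\cup(\Pp\cap[1,M_{n+1}])$. Otherwise $x=s(a+b)$ with $a,b\in C_n$, so $a+b\le 2M_n$, and we split by the type of $a+b$.
\begin{itemize}
\item If $a+b$ is composite, then $x\le (a+b)/2\le M_n$.
\item If $a+b$ is prime, then $x=a+b$ is a prime at most $M_{n+1}$.
\end{itemize}
Either way $x$ lies in the level-$(n+1)$ set, which proves \eqref{eq:structure} at level $n+1$. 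It also shows $M_{n+1}$ is prime: either $M_{n+1}=M_n$, which is prime, or $M_{n+1}>M_n$, in which case it must come from the prime branch. This bookkeeping mirrors the parity heuristic in the overview.

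\textbf{Bound $M_{n+1}\le M_n+M_{n-1}$.} Suppose $M_{n+1}>M_n$. Then $M_{n+1}=a+b$ is prime with $a,b\in C_n$. Since $a+b>M_n\ge 2$ and it is prime, $a+b$ is odd, so exactly one of $a,b$ is even, say $b$. By \eqref{eq:structure} at level $n$, $b$ is either at most $M_{n-1}$ or a prime; if $b$ is prime and even, then $b=2\le M_{n-1}$ (for $n\ge1$, $M_{n-1}\ge1$, and $2\le M_{n-1}$ once $n\ge2$). Hence $b\le M_{n-1}$, and with $a\le M_n$ this gives $M_{n+1}\le M_n+M_{n-1}$. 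The case $M_{n+1}=M_n$ is trivial. At $n=1$ we have $M_0=1$, so $b=2$ would break the argument; we instead use the direct computation $M_2=3=M_1+M_0$.

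\textbf{Bound $M_n\le F_{n+2}$.} This follows by induction from the recursion just proved, with base cases $M_1=2=F_3$ and $M_2=3=F_4$.

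The main obstacle is only the bookkeeping of small indices. The even input $b=2$ requires $M_{n-1}\ge2$, so $n=1$ must be checked by hand as above. One must also confirm that the two claims are proved jointly, since the primality of $M_n$ is used in the structure step.
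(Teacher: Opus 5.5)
Your proof is correct and follows the paper's argument: a composite sum is divided by at least $2$, so it cannot exceed the previous maximum; a new maximum is therefore an odd prime whose even summand is at most $M_{n-1}$ (with $n=1$ checked by hand); and the Fibonacci bound then follows by induction. The only difference is cosmetic: the containment \eqref{eq:structure} needs no inductive hypothesis, since any $x\in C_n$ is trivially at most $M_n$, so primality of $M_n$ is used only to conclude that $M_{n+1}$ is prime.
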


\begin{proof}
If $a,b\le M_{n-1}$ and $a+b$ is composite, then
\[
 s(a+b)=\frac{a+b}{\lpf(a+b)}\le\frac{a+b}{2}\le M_{n-1}.
\]
If $a+b$ is prime, the output is prime. Previously retained elements
are also at most $M_{n-1}$, proving \eqref{eq:structure}. Since
$M_1=2$, a maximum that changes can change only to a prime, and a
maximum that does not change remains prime. Moreover $M_2=3$, so
$M_n$ is odd for $n\ge2$.

For $n\ge2$, every even element of $C_n$ is at most $M_{n-1}$:
this follows from \eqref{eq:structure} for composites, and from
$2\le M_{n-1}$ for the only even prime. If $M_{n+1}>M_n$, its
generating sum is an odd prime, so one input is even and at most
$M_{n-1}$, and the other is at most $M_n$. If the maximum does not
increase, the same bound is immediate. The case $n=1$ is
$M_2=3=M_1+M_0$. Induction from $M_0=1$, $M_1=2$ now proves
the Fibonacci bound. In particular $M_n\ll\ph^n$, for example by
the formula $F_r=(\ph^r-(-\ph)^{-r})/\sqrt5$.
\end{proof}

\begin{lemma}[Extension of a complete prime prefix]\label{lem:extension}
Fix $B\ge1$. Suppose $Y\le X\le BY$, $C_n$ contains every prime
at most $X$, and $C_{n-1}$ contains every prime at most $Y$.
For sufficiently large $Y$, depending only on $B$, the set $C_{n+1}$
contains every prime at most
\begin{equation}\label{eq:extension}
 X+Y-\frac{X}{\log^2X}.
\end{equation}
\end{lemma}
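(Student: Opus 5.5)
The plan is to generate each prime $r$ in the range as $r=s(p+b)=p+b$, where $p$ is a prime just below $X$ and $b=r-p$ is a small integer that lies in $C_n$ because Lemma~\ref{lem:filling} makes $C_n$ almost complete up to $Y$. Primes $r\le X$ already lie in $C_n\subseteq C_{n+1}$, so fix a prime $r$ with
\[
 X<r\le X+Y-\frac{X}{\log^2X}.
\]

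\textbf{Step 1: the available small summands.} Since $C_{n-1}$ contains every prime at most $Y$, Lemma~\ref{lem:filling} applied with $j=n-1$ and $A=5$ gives
\[
 H_n(Y)\ll \frac{Y}{\log^5Y}.
\]
The implicit constant is absolute, because the lemma's constant is independent of $j$. From $Y\le X\le BY$ we get $\log Y\ge\log X-\log B\ge\tfrac12\log X$ once $Y$ is large in terms of $B$. Hence $H_n(Y)\ll X/\log^5X$.

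\textbf{Step 2: the large summands.} Let $h=X/\log^2X$ and let $P$ be the set of primes in $[X-h,X]$. Every element of $P$ lies in $C_n$ by hypothesis. By \eqref{eq:short-primes}, $|P|\sim X/\log^3X$.

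For $p\in P$ put $b_p=r-p$. Then $b_p\ge r-X\ge1$, and
\[
 b_p\le r-X+h\le Y,
\]
using the upper bound on $r$. So each $b_p$ lies in $\{1,\ldots,\floor Y\}$, and the map $p\mapsto b_p$ is injective.

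\textbf{Step 3: counting.} At most $H_n(Y)\ll X/\log^5X$ of the values $b_p$ are missing from $C_n$. This is $o(|P|)$, so for $Y$ large in terms of $B$ (which forces $X\ge Y$ to be large) some $p\in P$ has $b_p\in C_n$. Since $p+b_p=r$ is prime, $s(p+b_p)=r$, and so $r\in C_{n+1}$.

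The only real difficulty is making the threshold on $Y$ depend on $B$ alone. Three inputs must be uniform: the comparison $\log Y\asymp\log X$, which uses $X\le BY$; the lower bound for $|P|$ from Lemma~\ref{lem:pnt}, which needs only $X\ge Y$ large; and the $j$-independence of the constant in Lemma~\ref{lem:filling}. No step depends on $n$. The subtraction of $X/\log^2X$ in \eqref{eq:extension} is exactly what keeps $b_p\le Y$ for every $p$ in the short prime window.
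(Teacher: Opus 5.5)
Your proposal is correct and follows the paper's proof essentially verbatim. You pair each prime $p\in[X-X/\log^2X,X]$ with $r-p\le Y$, use Lemma~\ref{lem:filling} on $C_{n-1}$ at level $Y$ to show that fewer than $\#(\Pp\cap[X-X/\log^2X,X])\sim X/\log^3X$ of these differences are missing from $C_n$, and conclude $r=s(p+(r-p))\in C_{n+1}$. One cosmetic fix: the inequality $r-X\ge1$ can fail when $X$ is not an integer, but $b_p=r-p\ge1$ still holds because $p\le X<r$ are integers.
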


\begin{proof}
Set $h=X/\log^2X$. For large $Y$, $h<Y$. Lemma~\ref{lem:filling}
gives $H_n(Y)\ll Y/\log^4Y$. Let $r$ be a prime with
$X<r\le X+Y-h$. For each prime $p\in[X-h,X]$, the integer
$a=r-p$ is positive and satisfies
\[
 a\le(X+Y-h)-(X-h)=Y.
\]
Distinct choices of $p$ give distinct $a$. By Lemma~\ref{lem:pnt},
the number of candidate primes is asymptotic to $X/\log^3X$, which
exceeds $H_n(Y)$ for sufficiently large $Y$: indeed $X\asymp_B Y$
and $\log X\sim\log Y$. Some candidate therefore has $a\in C_n$.
Also $p\in C_n$, and $s(p+a)=r$ because $r$ is prime. The primes
at most $X$ are retained, completing the proof.
\end{proof}

\begin{proposition}\label{prop:rough-scales}
We have $Q_n\asymp M_n\asymp\ph^n$.
\end{proposition}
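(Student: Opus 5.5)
The plan is to combine the upper bound of Lemma~\ref{lem:structure} with a lower bound obtained by iterating Lemma~\ref{lem:extension}. The upper bound is immediate: Lemma~\ref{lem:structure} gives $Q_n\le M_n\le F_{n+2}\ll\ph^n$. So it remains to prove $Q_n\gg\ph^n$, which then also gives $M_n\ge Q_n\gg\ph^n$.

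For the lower bound, define $X_n$ as the largest real number such that $C_n$ contains every prime at most $X_n$. Up to rounding to the preceding prime, $X_n$ equals $Q_n$, and $X_n$ is nondecreasing because sets are retained. Using the exhaustion result \eqref{eq:exhaustion}, fix a large threshold $Y_0$ (chosen below) and pick $n_0$ so that $C_{n_0-1}$ contains all primes up to $Y_0$. By monotonicity, $C_{n_0}$ then contains all primes up to some $X\ge Y_0$. To apply Lemma~\ref{lem:extension} we need $X\le BY$ with $B$ fixed. I would take $B=2$ and use the truncated hypothesis $X'=\min(X_n,2X_{n-1})$. Completeness up to $X_n$ certainly implies completeness up to $X'$, and $X'\le 2X_{n-1}$ holds by construction.

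Lemma~\ref{lem:extension} then yields
\[
X_{n+1}\ge X'+X_{n-1}-\frac{X'}{\log^2X'}.
\]
If $X'=X_n$, this reads $X_{n+1}\ge X_n+X_{n-1}-X_n/\log^2X_n$. If instead $X'=2X_{n-1}<X_n$, we get $X_{n+1}\ge 3X_{n-1}(1-o(1))$, and monotonicity gives $X_{n+1}\ge X_n$ as well. In both cases I want a clean recursion of the form
\[
X_{n+1}\ge\bigl(1-\epsilon_n\bigr)\bigl(X_n+X_{n-1}\bigr),\qquad \epsilon_n\ll 1/\log^2X_{n-1}.
\]
The second case needs care, since $X_n$ could exceed $2X_{n-1}$. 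The simplest fix is to run the whole argument on a lower sequence $Z_n$ defined by the recursion $Z_{n+1}=Z_n+Z_{n-1}-Z_n/\log^2Z_n$ with $Z_n\le X_n$. For this sequence $Z_n\le 2Z_{n-1}$ holds automatically, and Lemma~\ref{lem:extension} applies with $X=Z_n$, $Y=Z_{n-1}$, so by induction $X_{n+1}\ge Z_{n+1}$.

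Finally I would show $Z_n\gg\ph^n$. I would first show $Z_n$ grows at least geometrically, for instance $Z_{n+1}\ge\frac32 Z_{n-1}$, so that $\log Z_n\gg n$ and the relative losses $\epsilon_n\ll 1/n^2$ are summable. Then I would compare with the Fibonacci recursion, using the ratio $w_n=Z_n/Z_{n-1}$ or, more robustly, the vector $(Z_n,Z_{n-1})$ and the dominant eigenvector of $\begin{pmatrix}1&1\\1&0\end{pmatrix}$. Since the entries are positive and monotone, $Z_{n+1}\ge(1-\epsilon_n)(Z_n+Z_{n-1})$ gives $(Z_{n+1},Z_n)\ge(1-\epsilon_n)\,T(Z_n,Z_{n-1})$ componentwise. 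Iterating,
\[
Z_n\ge\Bigl(\prod_k(1-\epsilon_k)\Bigr)\,(T^{n-n_0}v_0)_1\gg\ph^n,
\]
because $\prod_k(1-\epsilon_k)>0$ when $\sum_k\epsilon_k<\infty$.

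I expect the main obstacle to be the bookkeeping at the start. The threshold $Y_0$ must be large enough for Lemma~\ref{lem:extension} with $B=2$, and the seed $(Z_{n_0},Z_{n_0-1})$ must be chosen with $Z_{n_0-1}\le Z_{n_0}\le 2Z_{n_0-1}$, for example both equal to $Y_0$, so that the recursion stays in the lemma's range and keeps growing geometrically.
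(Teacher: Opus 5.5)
Your proposal is correct and is essentially the paper's proof. Your seeded lower sequence $Z_n$ (with $Z_{n_0-1}=Z_{n_0}=Y_0$, Lemma~\ref{lem:extension} applied with $B=2$, and geometric growth making the losses $1/\log^2 Z_n$ summable) is the paper's sequence $L_k$. Your componentwise comparison with the Fibonacci matrix plays the role of the paper's bound $m_{k+1}\ge(1-\epsilon_k)m_k$ for $m_k=\min(L_{k-1}/\varphi^{k-1},L_k/\varphi^k)$. The preliminary detour through $X_n$ and the truncation $X'$ is unnecessary; as stated it is also ill-defined, since no largest such real exists. It plays no role in your final argument.
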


\begin{proof}
Choose a fixed real $L$ large enough for Lemma~\ref{lem:extension}
with $B=2$, and such that $1/\log^2L\le1/4$.
By \eqref{eq:exhaustion} and nesting there is an $N$ with
$\{1,\ldots,\ceil L\}\subseteq C_N$. Define
\begin{equation}\label{eq:L-recursion}
 L_0=L_1=L,\qquad
 L_{k+1}=L_k+L_{k-1}-\epsilon_kL_k,\qquad
 \epsilon_k=\frac1{\log^2L_k}\quad(k\ge1).
\end{equation}
Inductively $L_{k-1}\le L_k\le2L_{k-1}$ and $L_k\ge L$.
Indeed, under these inequalities, $\epsilon_kL_k\le L_{k-1}/2$,
and therefore
\begin{equation}\label{eq:L-growth}
 \frac54L_k\le L_k+\frac12L_{k-1}
 \le L_{k+1}\le L_k+L_{k-1}\le2L_k.
\end{equation}
The base case is $L_0=L_1$. Lemma~\ref{lem:extension}, applied
successively, shows that every prime up to $L_k$ lies in $C_{N+k}$.
The two initial cases follow from the choice of $N$ and retention.

By \eqref{eq:L-growth}, $L_k\ge L(5/4)^{k-1}$ for $k\ge1$,
so $\sum_{k\ge1}\epsilon_k<\infty$. Set $u_k=L_k/\ph^k$.
Since $\ph^{-1}+\ph^{-2}=1$, \eqref{eq:L-recursion} implies
\[
 u_{k+1}\ge(1-\epsilon_k)
 \left(\frac{u_k}{\ph}+\frac{u_{k-1}}{\ph^2}\right).
\]
For $m_k=\min(u_{k-1},u_k)$ this gives
$m_{k+1}\ge(1-\epsilon_k)m_k$. Thus
\[
 m_k\ge m_1\prod_{j=1}^{k-1}(1-\epsilon_j)
 \ge m_1\exp\left(-2\sum_{j\ge1}\epsilon_j\right)>0,
\]
where $\log(1-z)\ge-2z$ for $0\le z\le1/2$.
Hence $L_k\gg\ph^k$. The largest prime at most $L_k$ is
$\sim L_k$ by Lemma~\ref{lem:pnt}, and is at most $Q_{N+k}$.
Consequently $Q_n\gg\ph^n$. Together with $Q_n\le M_n\ll\ph^n$,
this proves the proposition.
\end{proof}

\begin{lemma}[A contracting recurrence]\label{lem:contraction}
Suppose a bounded real sequence $(x_n)$ satisfies
$x_n+\rho x_{n-1}\to t$ for some $0<\rho<1$.
Then $x_n\to t/(1+\rho)$.
\end{lemma}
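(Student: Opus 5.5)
We reduce to a statement about the deviation $y_n = x_n - t/(1+\rho)$. Writing $e_n = x_n+\rho x_{n-1}-t$, so that $e_n\to 0$, a direct computation gives
\[
 y_n+\rho y_{n-1}=x_n+\rho x_{n-1}-\frac{(1+\rho)t}{1+\rho}=e_n .
\]
Hence $y_n=-\rho y_{n-1}+e_n$. It suffices to prove that a bounded sequence satisfying this perturbed contracting recurrence tends to $0$.

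The plan is to work with $\limsup$ of absolute values. Let $S=\limsup_{n\to\infty}|y_n|$, which is finite because $(x_n)$ is bounded. From the recurrence,
\[
 |y_n|\le\rho|y_{n-1}|+|e_n|.
\]
Taking $\limsup$ on both sides and using $|e_n|\to0$ yields $S\le\rho S$. Since $0<\rho<1$ and $S\ge0$ is finite, this forces $S=0$, so $x_n\to t/(1+\rho)$.

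An alternative that avoids the $\limsup$ step is to iterate the recurrence explicitly:
\[
 y_n=(-\rho)^{n-m}y_m+\sum_{k=m+1}^{n}(-\rho)^{n-k}e_k .
\]
Given $\varepsilon>0$, first choose $m$ with $|e_k|\le\varepsilon$ for all $k>m$. Then
\[
 |y_n|\le\rho^{n-m}\sup|y|+\frac{\varepsilon}{1-\rho},
\]
and letting $n\to\infty$ gives $\limsup|y_n|\le\varepsilon/(1-\rho)$.

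The only point needing care is the use of boundedness. It is exactly what makes $S$ finite, so that $S\le\rho S$ implies $S=0$. Without it, unbounded solutions such as $(-\rho)^{-n}$ are not at issue here since $\rho<1$ makes the homogeneous solution decay; boundedness is nonetheless the hypothesis that makes the $\limsup$ argument legitimate. There is no serious obstacle; this is a routine lemma.
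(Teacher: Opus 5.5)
Your proof is correct and is essentially the paper's argument: you shift to $y_n=x_n-t/(1+\rho)$, obtain $y_n=-\rho y_{n-1}+o(1)$, and conclude from $\limsup_n|y_n|\le\rho\limsup_n|y_n|$ with the $\limsup$ finite. Your iterated alternative is also valid. With $|y_m|$ in place of $\sup|y|$, it shows that the boundedness hypothesis is not actually needed when $0<\rho<1$.
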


\begin{proof}
Put $y_n=x_n-t/(1+\rho)$. Then $y_n=-\rho y_{n-1}+o(1)$.
The finite number $L=\limsup_n|y_n|$ satisfies $L\le\rho L$,
so $L=0$.
\end{proof}

\begin{proposition}\label{prop:constants}
There are constants $0<d\le c$ such that
\begin{equation}\label{eq:constants}
 M_n\sim c\ph^n,\qquad Q_n\sim d\ph^n.
\end{equation}
\end{proposition}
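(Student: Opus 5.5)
The plan is to show that both normalized sequences $q_n=Q_n/\ph^n$ and $\mu_n=M_n/\ph^n$ converge. Each satisfies an approximate Fibonacci recurrence in one direction only, so I will extract convergence from a one-sided almost-monotone quantity and then apply Lemma~\ref{lem:contraction}. The key identity is the following. If $x_{n+1}=x_n/\ph+x_{n-1}/\ph^2$, then, because $\ph^{-1}+\ph^{-2}=1$, the weighted sum
\[
 w_n=x_n+\ph^{-2}x_{n-1}
\]
is exactly invariant: $w_{n+1}=x_n+\ph^{-2}x_{n-1}=w_n$. This is the eigenvector for eigenvalue $1$ of the companion recurrence, whose other eigenvalue $-\ph^{-2}$ is contracting. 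By Proposition~\ref{prop:rough-scales}, both $q_n$ and $\mu_n$ are bounded above and bounded away from $0$.

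\textbf{Maxima.} Dividing \eqref{eq:fib-upper} by $\ph^{n+1}$ gives $\mu_{n+1}\le\mu_n/\ph+\mu_{n-1}/\ph^2$. Hence $W_n=\mu_n+\ph^{-2}\mu_{n-1}$ satisfies $W_{n+1}\le W_n$. Since $W_n$ is nonincreasing and positive, it converges to some limit $T$. Lemma~\ref{lem:contraction} with $\rho=\ph^{-2}$, applied to the bounded sequence $\mu_n$, then yields $\mu_n\to c:=T/(1+\ph^{-2})$.

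\textbf{Prime prefixes.} I apply Lemma~\ref{lem:extension} with $X=Q_n$ and $Y=Q_{n-1}$. The hypotheses hold by definition of $Q_n$, since all primes up to $Q_{n-1}$ lie in $C_{n-1}$. By Proposition~\ref{prop:rough-scales}, $Q_{n-1}\le Q_n\le BQ_{n-1}$ for a fixed $B$ and all large $n$; monotonicity of $Q_n$ follows from nesting of the $C_n$. Then $C_{n+1}$ contains every prime up to $Q_n+Q_{n-1}-Q_n/\log^2Q_n$. By Lemma~\ref{lem:pnt}, the largest prime below this bound is within $O(Q_n/\log^2Q_n)$ of it, so
\[
 Q_{n+1}\ge Q_n+Q_{n-1}-O\!\left(\frac{Q_n}{\log^2Q_n}\right).
\]
Dividing by $\ph^{n+1}$ and using $Q_n\asymp\ph^n$ gives $q_{n+1}\ge q_n/\ph+q_{n-1}/\ph^2-\delta_n$ with $\delta_n\ll n^{-2}$, which is summable. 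Thus $V_n=q_n+\ph^{-2}q_{n-1}$ satisfies $V_{n+1}\ge V_n-\delta_n$. Consequently $V_n-\sum_{j<n}\delta_j$ is nondecreasing and bounded, so it converges, and hence $V_n$ converges as well. Lemma~\ref{lem:contraction} again gives $q_n\to d$ for some $d$. Finally, $d>0$ follows from the lower bound in Proposition~\ref{prop:rough-scales}, and $d\le c$ follows from $Q_n\le M_n$.

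\textbf{Main obstacle.} The only delicate point is the applicability of Lemma~\ref{lem:extension}: it needs the bounded ratio $Q_n\le BQ_{n-1}$ uniformly and $Q_{n-1}$ large. Both come from Proposition~\ref{prop:rough-scales} with $B$ taken as the ratio of its implied constants times $\ph$. Beyond that, one must check that the loss term is genuinely summable after normalization; this holds because $\log^2Q_n\asymp n^2$.
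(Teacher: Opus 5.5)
Your proof is correct and is essentially the paper's argument: your $W_n$ is exactly the paper's nonincreasing quantity $\ph^{-n}(M_n+M_{n-1}/\ph)$. The prime-prefix part uses the same application of Lemma~\ref{lem:extension} with $X=Q_n$, $Y=Q_{n-1}$, the same summable $O(n^{-2})$ defect, and then Lemma~\ref{lem:contraction}. One small slip to fix: from $V_{n+1}\ge V_n-\delta_n$ it is $V_n+\sum_{j<n}\delta_j$ (not $V_n-\sum_{j<n}\delta_j$) that is nondecreasing, as in the paper; the conclusion is unaffected.
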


\begin{proof}
Define
\[
 T_n=\ph^{-n}\left(M_n+\frac{M_{n-1}}\ph\right).
\]
Using $\ph-\ph^{-1}=1$ and \eqref{eq:fib-upper}, we obtain
\[
 T_{n+1}-T_n
 =\ph^{-(n+1)}(M_{n+1}-M_n-M_{n-1})\le0.
\]
Proposition~\ref{prop:rough-scales} bounds $T_n$ away from zero,
so $T_n\to t>0$. Apply Lemma~\ref{lem:contraction} to
$x_n=M_n/\ph^n$ and $\rho=\ph^{-2}$. It gives
$M_n/\ph^n\to c=t/(1+\ph^{-2})>0$.

The sequence $Q_n$ is nondecreasing, and
Proposition~\ref{prop:rough-scales} bounds $Q_n/Q_{n-1}$ above by
a fixed constant. Apply Lemma~\ref{lem:extension} with
$X=Q_n$, $Y=Q_{n-1}$. All primes up to
$Z=Q_n+Q_{n-1}-Q_n/\log^2Q_n$ belong to $C_{n+1}$.
Since $Z\asymp Q_n$, Lemma~\ref{lem:pnt} shows that the largest
prime at most $Z$ is at least $Z-O(Q_n/\log^2Q_n)$. Therefore
\begin{equation}\label{eq:Q-lower-recursion}
 Q_{n+1}\ge Q_n+Q_{n-1}-O(Q_n/\log^2Q_n).
\end{equation}
Set
\[
 V_n=\ph^{-n}\left(Q_n+\frac{Q_{n-1}}\ph\right).
\]
It is bounded and bounded away from zero. Since
$Q_n\asymp\ph^n$ and $\log Q_n=n\log\ph+O(1)$,
\eqref{eq:Q-lower-recursion} gives, for fixed $D>0$ and all $n\ge n_0$,
\[
 V_{n+1}-V_n\ge-\frac{D}{n^2}.
\]
The sequence $V_n+D\sum_{j=n_0}^{n-1}j^{-2}$ is bounded and
nondecreasing, hence convergent. Since the added sums converge,
$V_n\to v>0$. Lemma~\ref{lem:contraction} now gives
$Q_n/\ph^n\to d=v/(1+\ph^{-2})>0$. Finally $Q_n\le M_n$
implies $d\le c$.
\end{proof}

\section{Approximate completeness of the prime prefix}

\begin{proposition}\label{prop:equal}
The constants in Proposition~\ref{prop:constants} are equal:
$d=c$. In particular $Q_n/M_n\to1$.
\end{proposition}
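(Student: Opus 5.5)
We argue by contradiction, following steps 3 and 4 of the overview. Assume $d<c$ and put $\theta=d/c<1$. Work at a large index $n$. By Proposition~\ref{prop:constants}, $C_n$ contains every prime up to $Q_n=(d+o(1))\ph^n$. Also, for a fixed shift $k\ge1$, the set $C_{n-k}\subseteq C_n$ contains the prime $M_{n-k}=(c+o(1))\ph^{n-k}$.

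The first step is to choose the shift $k$ and a prime $t$. We will use $t=M_{n-k}$, or more conveniently a maximum $M_m$ with $m\le n$ of size a fixed multiple of $\ph^n$. We want $t$ to exceed the "predicted" complete prefix at a suitable earlier scale, so that it is genuinely new information. Concretely, take $t=M_n\in C_n$, which satisfies $t/\ph^n\to c>d$. Fix prime intervals $I_X=[\alpha X,\beta X]$ and $J_X=[\gamma X,\delta X]$ with $X=\ph^n$. Both lie inside $[1,Q_n]$ (so $\beta,\delta<d$), and since $Q_n$ may be replaced by $Q_{n-1}$ we may also assume they lie inside $[1,Q_{n-1}]$.

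For $p\in I_X$ and $q\in J_X$ primes, $t+q$ is even and composite, so $(t+q)/2\in C_{n+1}$. Then $p+(t+q)/2=u\in C_{n+2}$ whenever $u$ is prime. This is exactly $2u-t=2p+q$.

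The second step applies Lemma~\ref{lem:binary} with $N=2u-t$, which is odd and satisfies $N\le BX$. It gives the following: for all but $O(X/\log^AX)$ primes $u$ in a range $U=[(d+\kappa)X,(d+\kappa')X]$, we have $u\in C_{n+2}$, as long as $\mathcal J_X(2u-t)\ge\eta X$ holds uniformly on $U$. Checking this measure condition means choosing parameters so that $2v\in[2u-t-\delta X,\,2u-t-\gamma X]$ meets $I_X$ in a fixed positive proportion. Since $t\approx cX$ and $u\approx(d+\kappa)X$, we need $2(d+\kappa)-c$ to be comparable to $2\beta+\delta$, which is less than $3d$. This works when $c$ is not too large relative to $d$. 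Otherwise we iterate, or use an earlier generation's maximum $t=M_{n-k}$ to scale down.

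So there is a parameter bookkeeping to do: pick $k$ and the intervals so that $U$ starts below, and ends above, the predicted prefix $Q_{n+2}\approx d\ph^{n+2}$ at the relevant generation. I expect this to be the main obstacle. The inequalities must be consistent using only $d<c$, with $\kappa>0$ small and everything in fixed proportions.

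The third step works two generations later. Let $r=R_{n+4}$ (or the appropriate index), the first missing prime, of size roughly $d\ph^{n+4}$. For each $u\in U\cap\Pp\cap C_{n+2}$, the complement $r-u$ is at most $Q_{n+2}$-ish, where $C_{n+3}$ is almost complete by Lemma~\ref{lem:filling}: since $C_{n+2}$ contains all primes up to $Q_{n+2}$, the count $H_{n+3}$ of missing integers is $O(X/\log^AX)$. The number of primes $u\in U$ is $\asymp X/\log X$ by Lemma~\ref{lem:pnt}. Subtracting the $O(X/\log^AX)$ missing $u$ and the $O(X/\log^AX)$ values of $u$ with $r-u\notin C_{n+3}$ leaves some valid pair. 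Then $s(u+(r-u))=r\in C_{n+4}$, contradicting the definition of $r$. For this we need $r-u\le Q_{n+2}$, i.e.\ $U$ must reach up to about $r-Q_{n+2}$; this is achieved by scaling $U$ by $\ph^2$ relative to the second step. Hence $d=c$, and then $Q_n/M_n\to d/c=1$.
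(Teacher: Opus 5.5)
Your outline has the same architecture as the paper's proof. You argue by contradiction, realize $2u-t=2p+q$ through $b=s(t+q)$ and $u=s(p+b)$, use Lemma~\ref{lem:binary} to get almost all primes in a range $U$, and then generate a first missing prime as $u+(r-u)$. But the bookkeeping you defer is the substance of the proof, and the concrete choices you do make fail.

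With $t,q\in C_n$ you also restrict $p$ to $[1,Q_n]$. With $X=\ph^n$, every output then satisfies $u=p+(t+q)/2\le\bigl(d+\tfrac{c+d}{2}\bigr)X+o(X)$. This exceeds the predicted prefix $Q_{n+2}\approx\ph^2dX$ only if $c>(2\ph^2-3)d=\sqrt5\,d$. The bound is scale-invariant, so passing to an earlier maximum $M_{n-k}$ cannot help: for $d<c\le\sqrt5\,d$ your construction produces nothing beyond the known prefix. Your range $U=[(d+\kappa)X,(d+\kappa')X]$ even lies below $Q_{n+1}\approx\ph dX$, so it is already inside $C_{n+1}$. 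The missing observation is that $p$ only needs to lie in $C_{n+1}$, so it should be taken just below $Q_{n+1}$. Then $u$ can reach $\bigl(\ph^2d+\tfrac{T-d}{2}\bigr)X$ with $T=t/X$, and this gain is positive exactly because $T>d$.

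You are right that a large $c/d$ must be handled with an earlier maximum; precisely, keeping $q=2u-t-2p\approx(2d-T)X$ positive forces $T<2d$. What makes the inequalities consistent using only $d<c$ is that the window $(d,\ph d]$ always contains some $c\ph^{-k}$ with $k\ge0$. The paper therefore takes $X=\ph^{n-1}$, $t=M_{n-1-k}\in C_{n-1}$, $q\in J_X$ just below $Q_{n-1}$, $p\in I_X$ just below $Q_n$, and $U_X$ just above $\ph^2dX\approx Q_{n+1}$. It then checks that $\mathcal J_X(2u-t)=|I_X|$ for every $u\in U_X$.

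Your endgame is also mis-indexed. With $r=R_{n+4}$ and $r-u\le Q_{n+2}$ you need $u\gtrsim\ph^3dX$, far beyond anything the bridge places in $C_{n+2}$. ``Scaling $U$ by $\ph^2$'' is not available, because rescaling shifts generations. The first missing prime must be taken one generation after $u$: in the paper, $r=R_{n+2}$ with $u\in C_{n+1}$. Then $r-u\approx\ph dX$ lands just below $Q_n$, where Lemma~\ref{lem:filling} makes $C_{n+1}$ almost complete. This is exactly why $U_X$ is placed just above $\ph^2dX$.
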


\begin{proof}
Assume $d<c$. Choose a fixed integer $k\ge0$ such that
\begin{equation}\label{eq:choose-T}
 d<T:=c\ph^{-k}\le\ph d.
\end{equation}
For example, $k=\ceil{\log(c/d)/\log\ph}-1$ has these properties.
Let
\[
 X=\ph^{n-1},\qquad t_n=M_{n-1-k},\qquad
 \Delta=T-d,\qquad \epsilon=\Delta/16.
\]
All these constants except $X$ and $t_n$ are now fixed.
For sufficiently large $n$, the integer $t_n$ is an odd prime in
$C_{n-1}$, by Lemma~\ref{lem:structure} and retention. Moreover,
\begin{equation}\label{eq:three-scales}
 t_n=TX+o(X),\qquad Q_{n-1}=dX+o(X),\qquad
 Q_n=\ph dX+o(X).
\end{equation}

Define now
\begin{equation}\label{eq:three-intervals}
 \begin{aligned}
 I_X&=[(\ph d-2\epsilon)X,(\ph d-\epsilon)X],\\
 J_X&=[(2d-T+3\epsilon)X,(2d-T+9\epsilon)X],\\
 U_X&=[(\ph^2d+\epsilon)X,(\ph^2d+2\epsilon)X].
 \end{aligned}
\end{equation}
All endpoints are positive. In particular, $T\le\ph d<2d$ makes
the lower endpoint of $J_X$ positive, and
$\epsilon\le(\ph-1)d/16$ makes that of $I_X$ positive.
The upper endpoint of $I_X$ is below $Q_n$ for large $n$, and
the upper endpoint of $J_X$ is
\[
 (2d-T+9\epsilon)X=(d-7\epsilon)X<Q_{n-1}.
\]
It follows that every prime in $I_X$ belongs to $C_n$, and every
prime in $J_X$ belongs to $C_{n-1}$.

For an integer $u\in U_X$, consider the target $N=2u-t_n$.
It is odd, and it lies in $[1,BX]$ for a fixed sufficiently large
$B$: its leading lower coefficient is
$2\ph^2d+2\epsilon-T>0$, and its upper coefficient is fixed.
For every real $p\in I_X$, the value $q=N-2p$ satisfies
\begin{equation}\label{eq:q-margins}
 (2d-T+4\epsilon)X+o(X)
 \le q\le
 (2d-T+8\epsilon)X+o(X).
\end{equation}
To verify these bounds, use the lower endpoint of $U_X$ and the
upper endpoint of $I_X$ for the lower bound, and the opposite
endpoints for the upper bound, together with $\ph^2-\ph=1$.
The error is exactly $-(t_n-TX)$, independent of $u$ and $p$.
For large $n$ its absolute value is less than $\epsilon X/2$.
Thus every such $q$ lies in $J_X$, and
\[
 \meas\{p\in I_X:2u-t_n-2p\in J_X\}=|I_X|=\epsilon X.
\]

Lemma~\ref{lem:binary} applies with these fixed intervals and
$\eta=\epsilon$. Since $u\mapsto2u-t_n$ is injective, all but
$O_A(X/\log^A X)$ integers $u\in U_X$ have a representation
\begin{equation}\label{eq:bridge}
 2p+q=2u-t_n,\qquad p\in\Pp\cap I_X,\quad q\in\Pp\cap J_X.
\end{equation}
If such a $u$ is itself prime, both operations in
\begin{equation}\label{eq:two-operations}
 b=\frac{t_n+q}{2}=s(t_n+q)\in C_n,
 \qquad u=p+b=s(p+b)\in C_{n+1}
\end{equation}
are valid. Indeed $t_n$ and $q$ are odd primes in $C_{n-1}$,
so their sum is composite with least prime factor $2$; and
$p\in C_n$, while the final sum $u$ is prime. Consequently
\begin{equation}\label{eq:extra-primes}
 \#\bigl((\Pp\cap U_X)\setminus C_{n+1}\bigr)
 \ll_A X/\log^A X.
\end{equation}

%\medskip\noindent\textit{Contradicting the first missing prime.}
Let $r_n=R_{n+2}$, the first prime absent from $C_{n+2}$.
It is the prime immediately after $Q_{n+2}$. By
Lemma~\ref{lem:pnt} and Proposition~\ref{prop:constants},
\begin{equation}\label{eq:first-missing-scale}
 r_n=\ph^3dX+o(X).
\end{equation}
For a prime $u\in U_X$, put $a=r_n-u$. The identity
$\ph^3-\ph^2=\ph$ gives
\[
 (\ph d-2\epsilon)X+o(X)
 \le a\le(\ph d-\epsilon)X+o(X).
\]
The errors here are uniform in $u$. By \eqref{eq:three-scales}
and the fixed positive margins, $1\le a\le Q_n$ for large $n$.

Lemma~\ref{lem:pnt} supplies
\[
 \#(\Pp\cap U_X)\sim\frac{\epsilon X}{\log X}
\]
candidate primes. At most $O(X/\log^4X)$ of them are absent from
$C_{n+1}$, by \eqref{eq:extra-primes} with $A=4$. Among the
remaining candidates, at most $H_{n+1}(Q_n)$ have
$r_n-u\notin C_{n+1}$, since distinct $u$ give distinct differences.
Lemma~\ref{lem:filling} and $Q_n\asymp X$ give
\[
 H_{n+1}(Q_n)\ll\frac{Q_n}{\log^4Q_n}
 \ll\frac{X}{\log^4X}.
\]
The combined exclusions are $o(X/\log X)$, so at least one candidate
has both $u\in C_{n+1}$ and $r_n-u\in C_{n+1}$. For that candidate,
\[
 s\bigl(u+(r_n-u)\bigr)=r_n\in C_{n+2},
\]
because $r_n$ is prime. This contradicts its definition and proves
$d=c$.
\end{proof}

\section{Cardinalities, limiting density and proof of Theorem~\ref{thm:main}}

\begin{proof}[Completion of the proof of Theorem~\ref{thm:main}]
Propositions~\ref{prop:constants} and \ref{prop:equal} give
$M_n\sim Q_n\sim c\ph^n$. Since both $M_{n-1}$ and $Q_{n-1}$
are integers, Lemma~\ref{lem:filling} gives
\begin{align*}
 H_n(M_{n-1})
 &\le M_{n-1}-Q_{n-1}+H_n(Q_{n-1})\\
 &\le M_{n-1}-Q_{n-1}
       +O\left(\frac{Q_{n-1}}{\log^4Q_{n-1}}\right)
 =o(M_{n-1}).
\end{align*}
On the other hand, \eqref{eq:structure} implies
\[
 M_{n-1}-H_n(M_{n-1})\le |C_n|\le M_{n-1}+\pi(M_n).
\]
Because $M_n/M_{n-1}\to\ph$ and
$\pi(M_n)=O(M_n/\log M_n)=o(M_{n-1})$, these two bounds give
$|C_n|\sim M_{n-1}\sim c\ph^{n-1}$. Taking successive ratios and
dividing by $M_n$ proves \eqref{eq:main-limits}.
\end{proof}

\begin{remark}
The assertion $Q_n/M_n\to1$ says that the \emph{complete} prime
prefix reaches a relative distance $o(1)$ from the maximum. For the
integers we prove $H_n(M_{n-1})=o(M_{n-1})$, allowing a sparse set
of holes. In particular, the proof does not require the solid core
$K_n=\max\{K:\{1,\ldots,K\}\subseteq C_n\}$ to satisfy
$K_n\sim M_{n-1}$.
\end{remark}

\appendix
\section{Proof of the interval-restricted binary lemma}\label{app:binary}

We prove Lemma~\ref{lem:binary}, retaining its fixed parameters.
Only the two analytic estimates stated next are used without proof.
All subsequent interval, local-factor, and exceptional-set calculations
are included. Write $e(z)=\exp(2\pi iz)$, and use $\tot$ for Euler's
totient, to distinguish it from the golden ratio $\ph$.

\subsection{The analytic estimates and representation count}

The major-arc estimate we use is \cite[Lemma 5 (Major arc estimates)]{LyallRice}  or \cite[Proposition~24]{Tao}:
for an interval $\mathcal I\subseteq[1,x]$, a reduced fraction $a/q$,
and real $\xi$, one has, for every $D>0$,
\begin{equation}\label{eq:tao-input}
 \sum_{m\in\mathcal I\cap\Z}\Lambda(m)
       \ee{(a/q+\xi/q)m}
 =\frac{\mu(q)}{\tot(q)}\int_{\mathcal I}\ee{\xi v/q}\,dv
 +O_D\left((q+x|\xi|)\frac{x}{\log^D x}\right).
\end{equation}
Its implicit constant need not be effective. Here $\mu$ is the
M\"obius function and $\Lambda$ is the von Mangoldt function.

The minor-arc estimate is the following consequence of
\cite[Lemma~1]{Kumchev}: if $(a,q)=1$, $1\le q\le y$, and
$|\theta-a/q|\le q^{-2}$, then
\begin{equation}\label{eq:vaughan-input}
 \left|\sum_{p\le y}(\log p)e(p\theta)\right|
 \ll\bigl(yq^{-1/2}+y^{4/5}+(yq)^{1/2}\bigr)(\log(2y))^4.
\end{equation}
The additional factor $1+q^2|\theta-a/q|$ in the cited lemma is
at most $2$ under this hypothesis.

Define
\[
 S_I(\theta)=\sum_{p\in\Pp\cap I_X}(\log p)e(p\theta),\qquad
 S_J(\theta)=\sum_{q\in\Pp\cap J_X}(\log q)e(q\theta).
\]
Orthogonality of the functions $e(m\theta)$ for integers $m$ gives
\begin{equation}\label{eq:weighted-count}
 R_X(N):=\int_0^1 S_I(2\theta)S_J(\theta)e(-N\theta)\,d\theta
 =\sum_{\substack{p\in\Pp\cap I_X, q\in\Pp\cap J_X\\2p+q=N}}
       (\log p)(\log q).
\end{equation}
Thus $R_X(N)>0$ is equivalent to a representation of the required kind.
Fix
\begin{equation}\label{eq:choose-P}
 K=A+20,\qquad P=(\log X)^K.
\end{equation}
For reduced fractions $a/q$ with $1\le q\le P$ and $0\le a<q$,
let
\[
 \mathfrak M(a,q)=
 \{\theta\in\R/\Z:\|\theta-a/q\|\le P/X\},
 \qquad
 \mathfrak M=\bigcup_{q\le P}\ \bigcup_{\substack{0\le a<q\\(a,q)=1}}
                 \mathfrak M(a,q).
\]
Here $\|\cdot\|$ is circular distance to $0$; the fraction $0/1$
is included. Distinct fractions are at circular distance at least
$P^{-2}$, so these arcs are disjoint once $2P^3<X$.
Their total measure is $O(P^3/X)$. Set
$\mathfrak m=(\R/\Z)\setminus\mathfrak M$, and denote the two
parts of \eqref{eq:weighted-count} by $R_{\mathfrak M}(N)$ and
$R_{\mathfrak m}(N)$.

\subsection{The minor arcs}

Put $D_0=\ceil{X/P}$. Dirichlet approximation gives, for every
$\theta\in\mathfrak m$, a reduced fraction $a/q$ with
\[
 1\le q\le D_0,\qquad
 |\theta-a/q|\le\frac1{qD_0}\le\frac{P}{qX}.
\]
Distances may be interpreted modulo $1$ by changing $a$ by a
multiple of $q$. If $q\le P$, this places $\theta$ in
$\mathfrak M(a,q)$, a contradiction. Thus, for large $X$,
\begin{equation}\label{eq:minor-denominator}
 P<q\le\frac{2X}{P},\qquad |\theta-a/q|\le q^{-2}.
\end{equation}
Apply \eqref{eq:vaughan-input} to the two initial prime sums defining
$S_J$. Their endpoints are fixed positive multiples of $X$, so
$q$ is below both endpoints for all sufficiently large $X$.
An endpoint convention changes the sum by at most $O(\log X)$.
Using \eqref{eq:minor-denominator}, we obtain
\begin{equation}\label{eq:minor-sup}
 \sup_{\theta\in\mathfrak m}|S_J(\theta)|
 \ll XP^{-1/2}(\log X)^4+X^{4/5}(\log X)^4.
\end{equation}
Parseval's identity and \eqref{eq:theta} give
\[
 \int_0^1|S_I(2\theta)|^2\,d\theta
 =\sum_{p\in\Pp\cap I_X}(\log p)^2\ll X\log X.
\]
The factor $2$ does not affect orthogonality, since $2(p-p')$ is
a nonzero integer when $p\ne p'$. Bessel's inequality, applied to
$\ind_{\mathfrak m}(\theta)S_I(2\theta)S_J(\theta)$, therefore yields
\begin{align}
 \sum_{N\in\Z}|R_{\mathfrak m}(N)|^2
 &\le\int_{\mathfrak m}|S_I(2\theta)S_J(\theta)|^2\,d\theta\notag\\
 &\ll X^3P^{-1}(\log X)^9+X^{13/5}(\log X)^9
 \ll_A \frac{X^3}{\log^{A+2}X}.\label{eq:minor-L2}
\end{align}
For the last inequality, the first term has logarithmic exponent
$9-K=-A-11$, and the second satisfies
$X^{-2/5}\log^{A+11}X\to0$ after division by $X^3/\log^{A+2}X$.
In particular, for each fixed $\lambda>0$,
\begin{equation}\label{eq:minor-exceptions}
 \#\{N\in\Z:|R_{\mathfrak m}(N)|>\lambda X\}
 \ll_{A,\lambda}\frac{X}{\log^{A+2}X}.
\end{equation}

\subsection{The major arcs and the real solution measure}

For real $\zeta$, put
\[
 V_I(\zeta)=\int_{I_X}e(v\zeta)\,dv,\qquad
 V_J(\zeta)=\int_{J_X}e(v\zeta)\,dv.
\]
On an arc $\theta=a/q+\zeta$, $|\zeta|\le P/X$, let
$q'=q/(q,2)$, the denominator after reducing $2a/q$.
Formula \eqref{eq:tao-input} implies, for every fixed $E>0$,
\begin{equation}\label{eq:major-approx}
 \begin{aligned}
 S_J(a/q+\zeta)
  &=\frac{\mu(q)}{\tot(q)}V_J(\zeta)+O_{E,K}(X/\log^E X),\\
 S_I(2a/q+2\zeta)
  &=\frac{\mu(q')}{\tot(q')}V_I(2\zeta)+O_{E,K}(X/\log^E X).
 \end{aligned}
\end{equation}
Here is the required uniformity check. Choose $x=CX$ with fixed
$C>\max(\beta,\delta,1)$. In the first line of \eqref{eq:major-approx},
use $\xi=q\zeta$ in \eqref{eq:tao-input}; in the second, use
$\xi=2q'\zeta$ with denominator $q'$. In both cases
$q+x|\xi|=O(P^2)$, with $q'$ replacing $q$ when appropriate.
Taking exponent $D=E+2K+2$ in \eqref{eq:tao-input} gives the
stated error. Removing prime powers costs at most
$O(\sqrt X\log^2X)=O_E(X/\log^E X)$, as in Lemma~\ref{lem:pnt}.

Take $E=A+3K+10$. The sums $S_I,S_J$ and the integrals $V_I,V_J$
are $O(X)$; for the sums, this follows from \eqref{eq:theta}.
Since the major arcs have measure $O(P^3/X)$, multiplying
\eqref{eq:major-approx} and integrating causes total error
\begin{equation}\label{eq:major-product-error}
 O\bigl(XP^3/\log^E X\bigr)=O(X/\log^{A+10}X).
\end{equation}
Define the Ramanujan sum and its coefficient by
\[
 c_q(N)=\sum_{\substack{0\le a<q\\(a,q)=1}}e(-Na/q),\qquad
 w_q=\frac{\mu(q)\mu(q')}{\tot(q)\tot(q')}.
\]
The main term is
\[
 \sum_{q\le P}w_qc_q(N)
 \int_{-P/X}^{P/X}V_I(2\zeta)V_J(\zeta)e(-N\zeta)\,d\zeta.
\]
For any interval $[a,b]$ and nonzero $\zeta$,
$|\int_a^b e(v\zeta)\,dv|\le\min(b-a,1/(\pi|\zeta|))$.
Consequently
\[
 |V_I(2\zeta)V_J(\zeta)|\ll\min(X^2,|\zeta|^{-2}),
 \qquad
 \int_{|\zeta|>P/X}|V_I(2\zeta)V_J(\zeta)|\,d\zeta\ll X/P.
\]
Using $|c_q(N)|\le\tot(q)$, the error from extending every integral
to $\R$ is at most
\begin{equation}\label{eq:integral-tail}
 \frac{CX}{P}\sum_{q\le P}\frac1{\tot(q')}
 \ll\frac{X\log^2(2P)}{P}.
\end{equation}
Indeed each integer $q'$ arises from at most two values of $q$,
and $r/\tot(r)\le\tau(r)$, where $\tau$ counts divisors. Hence
\[
 \sum_{q\le P}\frac1{\tot(q')}
 \le2\sum_{r\le P}\frac{\tau(r)}r
 =2\sum_{ab\le P}\frac1{ab}
 \le2(1+\log P)^2.
\]
The inequality $r/\tot(r)\le\tau(r)$ follows prime by prime:
for $\ell^a\parallel r$, $\ell/(\ell-1)\le2\le a+1$.

For clarity, Fourier inversion introduces no unspecified geometric
constant here. The continuous function
\[
 f(y)=\left(\tfrac12\ind_{2I_X}*\ind_{J_X}\right)(y)
 =\int_{I_X}\ind_{J_X}(y-2v)\,dv
\]
has Fourier transform $V_I(2\zeta)V_J(\zeta)$ with the positive
exponential convention used above. That product is integrable by
the displayed bound, and $f(N)=\mathcal J_X(N)$. Fourier inversion gives
\begin{equation}\label{eq:singular-integral}
 \int_\R V_I(2\zeta)V_J(\zeta)e(-N\zeta)\,d\zeta
 =\mathcal J_X(N).
\end{equation}
Write
\[
 \mathfrak S_P(N)=\sum_{q\le P}w_qc_q(N).
\]
Combining \eqref{eq:major-product-error}--\eqref{eq:singular-integral},
and using $K=A+20$, proves the uniform formula
\begin{equation}\label{eq:major-final}
 R_{\mathfrak M}(N)
 =\mathfrak S_P(N)\mathcal J_X(N)+O_A(X/\log^{A+2}X).
\end{equation}
For the second error, note explicitly that
$\log^2(2P)/P=O_A(\log^{-A-2}X)$.

\subsection{An elementary bound for the series tail}

We first prove that, for $Y\ge2$ and $N\ge1$,
\begin{equation}\label{eq:series-tail}
 \sum_{q>Y}\frac{|c_q(N)|}{\tot(q)^2}
 \ll\frac{\log^3(2Y)}{Y}
       F(N),\qquad
 F(N)=\sum_{h\mid N}\left(\frac{h}{\tot(h)}\right)^2.
\end{equation}
The identity
\begin{equation}\label{eq:ramanujan-identity}
 c_q(N)=\sum_{h\mid(q,N)}h\mu(q/h)
\end{equation}
follows by inserting
$\ind_{(a,q)=1}=\sum_{d\mid(a,q)}\mu(d)$ into the defining exponential
sum. The sum over multiples of $d$ is $q/d$ if $q/d\mid N$ and
zero otherwise; set $h=q/d$. In particular,
$|c_q(N)|\le\sum_{h\mid(q,N)}h$.

The product formula for the totient gives
$\tot(hk)\ge\tot(h)\tot(k)$. Therefore, writing $q=hk$ and
interchanging nonnegative sums,
\begin{equation}\label{eq:tail-divisors}
 \sum_{q>Y}\frac{|c_q(N)|}{\tot(q)^2}
 \le\sum_{h\mid N}\frac{h}{\tot(h)^2}
             \sum_{k>Y/h}\frac1{\tot(k)^2}.
\end{equation}
For $z\ge1$ we claim
\begin{equation}\label{eq:totient-tail}
 \sum_{k>z}\frac1{\tot(k)^2}\ll\frac{\log^3(2z)}z.
\end{equation}
Let $\tau_4(k)$ count ordered factorizations $k=abcd$ in positive
integers. Prime by prime,
\[
 \left(\frac{k}{\tot(k)}\right)^2\le\tau(k)^2\le\tau_4(k),
\]
since $(a+1)^2\le\binom{a+3}{3}$ for every integer $a\ge0$.
Furthermore, for $w\ge1$,
\[
 \sum_{k\le w}\tau_4(k)
 =\sum_{abc\le w}\floor{\frac{w}{abc}}
 \le w\left(\sum_{a\le w}\frac1a\right)^3
 \le w(1+\log w)^3.
\]
Split $k>z$ into ranges $2^jz<k\le2^{j+1}z$. These inequalities give
\[
 \sum_{k>z}\frac1{\tot(k)^2}
 \le\sum_{j\ge0}\frac1{(2^jz)^2}
                    \sum_{k\le2^{j+1}z}\tau_4(k)
 \ll\frac1z\sum_{j\ge0}2^{-j}(\log(2z)+j)^3
 \ll\frac{\log^3(2z)}z,
\]
because $\sum_{j\ge0}2^{-j}(j+1)^3$ converges. This proves
\eqref{eq:totient-tail}. For $0\le z<1$, the same sum is $O(1)$,
by the bound at $z=1$ and the single term $k=1$.

For $h\le Y$, apply \eqref{eq:totient-tail} to $z=Y/h$ in
\eqref{eq:tail-divisors}, bounding $\log(2Y/h)$ by $\log(2Y)$.
The resulting contribution for $h$ is at most
\[
 C\frac{\log^3(2Y)}Y\frac{h^2}{\tot(h)^2}.
\]
For $h>Y$, the $O(1)$ bound gives at most $Ch/\tot(h)^2$,
which is also bounded by the same expression after increasing $C$.
This proves \eqref{eq:series-tail}, including absolute convergence
for each fixed $N$.

We will also need its first moment. For large $X$,
\begin{align}
 \sum_{1\le N\le BX}F(N)
 &\le BX\sum_{h\le BX}\frac1h
                  \left(\frac{h}{\tot(h)}\right)^2\notag\\
 &\le BX\sum_{h\le BX}\frac{\tau_4(h)}h
 =BX\sum_{abcd\le BX}\frac1{abcd}
 \le BX(1+\log(BX))^4
 \ll_B X\log^4X.\label{eq:F-mean}
\end{align}

\subsection{The local factors and positivity}

For odd $N$, absolute convergence from \eqref{eq:series-tail}
allows us to compute the full series
$\mathfrak S(N)=\sum_{q\ge1}w_qc_q(N)$. A nonzero coefficient
requires $q$ squarefree. For odd squarefree $q$, we have $q'=q$
and $w_q=\mu(q)^2/\tot(q)^2$. For even squarefree $q=2r$,
with $r$ odd, we have $q'=r$ and
\[
 w_{2r}=-\frac{\mu(r)^2}{\tot(r)^2},\qquad
 c_{2r}(N)=c_2(N)c_r(N)=-c_r(N).
\]
Here $c_q(N)$ is multiplicative in $q$ by
\eqref{eq:ramanujan-identity}, and $c_2(N)=-1$ for odd $N$.
Thus the even term equals the corresponding odd term, and
\begin{equation}\label{eq:full-series}
 \begin{aligned}
 \mathfrak S(N)
 &=2\sum_{\substack{r\ge1\\r\text{ odd}}}
               \frac{\mu(r)^2c_r(N)}{\tot(r)^2}\\
 &=2\prod_{\ell>2}
             \left(1-\frac1{(\ell-1)^2}\right)
       \prod_{\substack{\ell\mid N\\\ell>2}}
             \frac{\ell-1}{\ell-2}.
 \end{aligned}
\end{equation}
All products indexed by $\ell$ here are over primes. To verify
each factor, \eqref{eq:ramanujan-identity} gives
$c_\ell(N)=\ell-1$ if $\ell\mid N$ and $c_\ell(N)=-1$ otherwise.
The factor at a divisor prime is $1+1/(\ell-1)$; dividing this by
$1-1/(\ell-1)^2$ gives $(\ell-1)/(\ell-2)$, as displayed.

In particular,
\begin{equation}\label{eq:s0}
 \mathfrak S(N)\ge s_0:=
 2\prod_{\ell>2}\left(1-\frac1{(\ell-1)^2}\right)>0
 \qquad(N\text{ odd}).
\end{equation}
For positivity of this constant, the factors lie in $(0,1)$,
$1/(\ell-1)^2\le1/4$, and
$\sum_{\ell>2}1/(\ell-1)^2<\infty$. The inequality
$\log(1-z)\ge-2z$ for $0\le z\le1/2$ bounds the product away
from zero.

The odd terms of $\mathfrak S_P(N)$ have $r\le P$, whereas its
even terms have $r\le P/2$. Consequently, for odd $N$,
\[
 |\mathfrak S(N)-\mathfrak S_P(N)|
 \le2\sum_{r>P/2}\frac{|c_r(N)|}{\tot(r)^2}
 \ll\frac{\log^3(2P)}P F(N).
\]
Summing over $1\le N\le BX$ and using \eqref{eq:F-mean}, we see
that the number of odd $N$ in this range for which
$|\mathfrak S(N)-\mathfrak S_P(N)|>s_0/2$ is at most
\begin{equation}\label{eq:series-exceptions}
 O_B\left(\frac{X\log^4X\log^3(2P)}P\right)
 =O_A\left(\frac{X}{\log^A X}\right).
\end{equation}
The last estimate follows from $K=A+20$, since
$\log^3(2P)=O_A((\log\log X)^3)$.

\subsection{Completion of the representation lemma}

Outside the exceptional set in \eqref{eq:series-exceptions}, every
odd target has $\mathfrak S_P(N)\ge s_0/2$. If also
$\mathcal J_X(N)\ge\eta X$, then \eqref{eq:major-final} gives,
for sufficiently large $X$,
\[
 \operatorname{Re}R_{\mathfrak M}(N)
 \ge\frac{s_0\eta X}{2}-\frac{s_0\eta X}{8}
 =\frac{3s_0\eta X}{8}.
\]
By \eqref{eq:minor-exceptions} with $\lambda=s_0\eta/8$,
all but $O_A(X/\log^{A+2}X)$ further targets satisfy
$|R_{\mathfrak m}(N)|\le s_0\eta X/8$. For a target outside
both exceptional sets, \eqref{eq:weighted-count} is a nonnegative
real number and obeys
\[
 R_X(N)=\operatorname{Re}
             \bigl(R_{\mathfrak M}(N)+R_{\mathfrak m}(N)\bigr)
 \ge\frac{s_0\eta X}{4}>0.
\]
The union of the exceptional sets has size $O_A(X/\log^A X)$.
This proves Lemma~\ref{lem:binary}. All interval data were fixed
before $X$ tended to infinity; no uniformity as an interval length
or margin tends to zero has been used. \hfill$\square$

\bigskip
\noindent
\begin{scriptsize}
\textsc{Department of Computer Science, Columbia University,
New York, NY 10027, USA}

\noindent
\textit{Email address:}  rep2159@columbia.edu
\end{scriptsize}

\begin{thebibliography}{9}

\bibitem{BakerHarman1998}
R.~C. Baker and G. Harman,
\emph{The three primes theorem with almost equal summands},
Philos. Trans. Roy. Soc. London Ser. A
\textbf{356} (1998), no.~1738, 763--780.

\bibitem{CVZ}
M. Caragiu, P. A. Vicol, and M. Zaki,
\href{https://www.fq.math.ca/Papers1/55-4/CaragiuVicolZaki03162017.pdf}
{On Conway's subprime function, a covering of $\N$ and an unexpected
appearance of the golden ratio},
\emph{The Fibonacci Quarterly} \textbf{55} (2017), no.~4, 327--331.
%See Theorem~1 and Conjecture~3.

\bibitem{CL}
G. Coppola and M. B. S. Laporta,
%\href{https://www.giovannicoppola.name/files/articoli/8_giovanni_coppola_number_theory_seminar_politecnico_torino_245.pdf}
{On the representation of even integers as sum of two almost equal primes},
\emph{Rendiconti del Seminario Matematico, Universit\`a e Politecnico
di Torino} \textbf{53} (1995), no.~3, 245--252.
%See the corollary to Theorem~1, p.~246.

\bibitem{Github}
Lean4 formalization for the proof of the Golden-ratio growth of Conway's subprime closure,
\url{https://github.com/Renriviera/conway-subprime-golden/}.

\bibitem{GuyKhovanovaSalazar2014}
R.~K. Guy, T. Khovanova, and J. Salazar,
\emph{Conway's Subprime Fibonacci Sequences},
Mathematics Magazine \textbf{87} (2014), no.~5, 323--337.

\bibitem{Kumchev}
A. V. Kumchev,
\href{https://tigerweb.towson.edu/akumchev/a23.pdf}
{On sums of primes from Beatty sequences},
\emph{Integers} \textbf{8} (2008), Article A08, 12 pp.
%See Lemma~1.


\bibitem{LyallRice}
N. Lyall and A. Rice, 
{Polynomial Differences in the Primes}, in \emph{Combinatorial and Additive Number Theory: CANT 2011 and 2012}, 
Springer Proceedings in Mathematics \& Statistics \textbf{101} (2014), 129Ð146.


\bibitem{OEIS}
OEIS Foundation Inc. (2011), The On-Line Encyclopedia of Integer Sequences, A117818, \href{https://oeis.org/A117818}.

\bibitem{openai2026astra}
OpenAI, \emph{GPT-6 Astra},
2026. \url{https://openai.com/index/gpt-6-astra/}.

\bibitem{Roberts}
S. Roberts, 
{Genius At Play: The Curious Mind of John Horton Conway, Audiobook, Jennifer Van Dyck
(Reader)}, 
Bloomsbury Press, 2015.

\bibitem{Tao}
T. Tao,
\href{https://terrytao.wordpress.com/2015/03/30/254a-notes-8-the-hardy-littlewood-circle-method-and-vinogradovs-theorem/}
{254A, Notes 8: The Hardy--Littlewood circle method and Vinogradov's theorem},
lecture notes, 30 March 2015.
%See Proposition~24. Accessed 8 September 2026.

\end{thebibliography}
\end{document}